\newtheorem{theorem}{Theorem}[section]
\newtheorem{lemma}[theorem]{Lemma}
\newtheorem{example}[theorem]{Example}
\newtheorem{remark}[theorem]{Remark}
\newcommand{\cL}{\mathcal{L}}
\newcommand{\cM}{\mathcal{M}}
\newcommand{\cR}{\mathcal{R}}
\newcommand{\sech}{\mbox{sech}}
\title{Stability of the solitary wave solutions to a coupled BBM system}
\author{Hongqiu Chen and Xiao-Jun Wang*}
 \email{hchen1@memphis.edu}
 \email{xwang13@memphis.edu}
\begin{document}
\maketitle
\centerline{\scshape Hongqiu Chen}
\medskip
{\footnotesize
 \centerline{Department of mathematical sciences, University of Memphis, TN, 38152}
} 

\medskip

\centerline{\scshape Xiao-Jun Wang}
\medskip
{\footnotesize
 \centerline{Department of mathematical sciences, University of Memphis, TN, 38152}
}

\bigskip


\begin{abstract}
In this work, we present a stability criteria for the solitary wave solutions to a BBM system that contains coupled nonlinear terms. Using the idea by Bona, Chen and Karakashian \cite{BCK15} and exploiting the accurate point spectrum information of the associated Schr$\ddot{o}$dinger operator, we improve the stability results previously got by Pereira \cite{Per05}.
\end{abstract}

\section{Introduction}

This work is essentially motivated by two papers, \cite{BCK15} and \cite{Per05}. In \cite{BCK15}, Bona, Chen and Karakashian consider a class of BBM-type systems in form of
\begin{eqnarray}\label{main00}
&&u_t+u_x-u_{xxt}+P(u,v)_x=0, \nonumber\\
&&v_t+v_x-v_{xxt}+Q(u,v)_x =0,
\end{eqnarray}
where 
$$P(u, v)=Au^2+Buv+Cv^2, \quad  Q(u, v)=Du^2+Euv+Fv^2$$
with constants $A, B, \cdots, F\in \mathbb R$. Namely, $P$ and $Q$ are homogeneous, quadratic polynomials. 
The other paper \cite{Per05} by Pereira considers BBM system
\begin{eqnarray}\label{main0}
U_t+c_0U_x-AU_{xxt}+(\nabla H(u))_x=0,
\end{eqnarray} where $U=U(x,t)$ is an $\cR^2$-valued function, $c_0$ is a non-negative parameter, $A$ is a $2\times 2$ real positive definite matrix, and $\nabla H$ is the gradient of a homogeneous function $H:\cR^2\rightarrow \cR$ with proper regularity. The nonlinear terms in (\ref{main0}) can be seen as generalizations of that in (\ref{main00}), since the latter were essentially forced into a structure of the same form as the former, due to certain positiveness requirement. However, the result in \cite{BCK15} turns out to be stronger than that in \cite{Per05}. The reason is that, in a proper space, \cite{BCK15} takes into account the accurate point spectrum information of the Schr$\ddot{o}$dinger operator $\cL_0=-{d^2\over dx^2}-\phi(x)$, where $\phi$ is a typical potential function. Here we present a stability result which generalizes \cite{BCK15} and improves \cite{Per05}.
\subsection{Main result}
We consider a BBM system in form of
\begin{eqnarray}\label{main1}
U_t+U_x-U_{xxt}+(\nabla H(U))_x=0,
\end{eqnarray} where $U=U(x,t)$ is an $\cR^2$ valued function and 
$\nabla H=(H_u,H_v)^t$ is the gradient of a $C^3$ homogeneous function 
$H:\cR^2\rightarrow \cR$, the superscript $^t$ represents transpose. The equations are the same as in (\ref{main0}), except that we have simplified the system by letting $c_0=1$ and $A=I_{2\times 2}$ for sake of clarity. The techniques used in this work apply to cases with more general $c_0$ or $A$. In this note we focus on the stability results. We refer to \cite{Hak03, Per00} for global well-posedness of (\ref{main1}), or \cite{Per05, Hak03} for instability results.

The functional space is $L^2(\cR)$, in which the scalar operator $\cL$ defined below has domain $H^2(\cR)$. While in the coupled system, the natural product spaces $L^2(\cR)\times L^2(\cR)$ and $H^2(\cR)\times H^2(\cR)$ are used. The operator $\cL$ sends $H^1(\cR)$ to $H^{-1}(\cR)$; $<\cL u, u>$ denotes the the pairing of $\cL u$ and $u$ on space $H^{-1}(\cR)$ and $H^1(\cR)$. We shall not further dig into the issues on space setting, instead we simply assume all the functions here have enough regularity for all the operations under consideration. However, precise functional settings are crucial in both the well-posedness and the stability proofs, for instance, in case of the space decomposition \cite{BSS87}.

We assume here the homogeneous function $H(\cdot,\cdot)$ is of order $p+2$ with integer $p\geq 1$. 
Suppose $\Phi=(\phi, \mu\phi)^t$ is a {\it proportional solitary wave solution} of (\ref{main1}), where $\mu\in \cR$ is the proportional coefficient. Assume $H_u(1,\mu)>0$ and 
let $$\cM={1\over (p+1)H_u(1,\mu)}\left( \begin{array}{cc} H_{uu}(1,\mu) & H_{uv}(1,\mu) \\
           H_{uv}(1,\mu) & H_{vv}(1,\mu) \end{array}\right).$$

Our main result is 
\begin{theorem}\label{mainthm}
Assume $\det(\cM)<{1\over p+1}$. We have
\begin{enumerate}
\item for $p\leq 4$, $\Phi$ is stable;
\item if $p>4$, then there exists a $\omega_p>1$ such that $\Phi$ is stable for $\omega>\omega_p$ and unstable for $1<\omega<\omega_p$.
\end{enumerate}
\end{theorem}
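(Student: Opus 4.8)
\textit{Proof proposal.} The plan is to set the problem in the Grillakis--Shatah--Strauss (GSS) framework and, following \cite{BCK15}, to pin down the \emph{exact} point spectrum of the second--variation operator by diagonalising it into two scalar Schr\"odinger operators of P\"oschl--Teller type. First I would record the Hamiltonian structure of \eqref{main1}: with the Hamiltonian $\mathcal E(U)=\int_{\cR}\bigl(\tfrac12|U|^2+H(U)\bigr)\,dx$ and the translation--generating invariant $\mathcal V(U)=\tfrac12\int_{\cR}\bigl(|U|^2+|U_x|^2\bigr)\,dx$, a solitary wave $\Phi=\Phi_\omega$ of speed $\omega$ is a critical point of $\omega\mathcal V-\mathcal E$, i.e.\ $(1-\omega)\Phi+\omega\Phi_{xx}+\nabla H(\Phi)=0$. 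Because $\nabla H$ is homogeneous of degree $p+1$, the first component of the proportional wave $\Phi=(\phi,\mu\phi)^t$ solves the scalar equation $\omega\phi''+(1-\omega)\phi+H_u(1,\mu)\phi^{p+1}=0$; this forces $\omega>1$ (consistent with the standing assumption $H_u(1,\mu)>0$) and has the explicit profile $\phi(x)=A\,\sech^{2/p}(Bx)$ with $A^p=\tfrac{(p+2)(\omega-1)}{2H_u(1,\mu)}$ and $B^2=\tfrac{p^2(\omega-1)}{4\omega}$. I would also note, from Euler's identity for the homogeneous gradient $\nabla H$, that $\cM(1,\mu)^t=(1,\mu)^t$; since $\cM$ is symmetric it is diagonalised by an orthogonal matrix $R$, with eigenvalues $\lambda_1=1$ (eigenvector $(1,\mu)^t$) and $\lambda_2=\det(\cM)$.

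The heart of the matter is the spectrum of the Hessian $\mathbf L_\omega:=\omega\mathcal V''(\Phi)-\mathcal E''(\Phi)=-\omega\partial_x^2+(\omega-1)I-\phi^p\,D^2H(1,\mu)$ on $L^2(\cR)\times L^2(\cR)$. Since $D^2H(1,\mu)=(p+1)H_u(1,\mu)\cM$ and the constant orthogonal $R$ commutes with $\partial_x^2$, the change of variables $W=RU$ turns $\mathbf L_\omega$ into the direct sum of the scalar operators $L_{\lambda}=-\omega\partial_x^2+(\omega-1)-(p+1)H_u(1,\mu)\,\lambda\,\phi^p$ for $\lambda=\lambda_1,\lambda_2$. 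Inserting the explicit $\phi$ and rescaling $y=Bx$ exhibits $L_\lambda$, up to the positive factor $\tfrac{p^2(\omega-1)}{4}$, as $-\partial_y^2-\lambda\,\tfrac{2(p+1)(p+2)}{p^2}\sech^2y+\tfrac{4}{p^2}$, whose point spectrum is $\bigl\{\,4/p^2-(\nu-k)^2:\ k=0,1,2,\dots,\ k<\nu\,\bigr\}$ with $\nu(\nu+1)=\lambda\,\tfrac{2(p+1)(p+2)}{p^2}$, all lying below the essential spectrum $[\,4/p^2,\infty)$. For $\lambda_1=1$ one gets $\nu=1+2/p$, hence exactly one negative eigenvalue $-(1+4/p)$, a simple zero eigenvalue with eigenfunction $\propto\sech^{2/p}(y)\tanh y$ --- which, undoing the scaling, is $\phi'$, so in the original variables the null direction is $(1,\mu)^t\phi'=\Phi_x$ --- and the rest of the spectrum strictly positive. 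The hypothesis $\det(\cM)<\tfrac1{p+1}$ forces, for $\lambda_2=\det(\cM)$, the value $\nu<2/p$, so every point eigenvalue of $L_{\lambda_2}$ exceeds $4/p^2-(2/p)^2=0$ and $L_{\lambda_2}$ is strictly positive. Therefore $\mathbf L_\omega$ has exactly one negative eigenvalue, kernel $=\mathrm{span}\{\Phi_x\}$, and the remainder of its spectrum positive and bounded away from $0$ --- precisely what the GSS / \cite{BCK15} abstract stability theory requires, global well--posedness being supplied by the references cited above.

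With this spectral input, stability of $\Phi_\omega$ is governed by the sign of $d''(\omega)$, where $d(\omega)=\omega\mathcal V(\Phi_\omega)-\mathcal E(\Phi_\omega)$; differentiating and using the Euler--Lagrange equation gives $d'(\omega)=\mathcal V(\Phi_\omega)$, hence $d''(\omega)=\tfrac{d}{d\omega}\mathcal V(\Phi_\omega)$, with $\Phi_\omega$ stable when $d''(\omega)>0$ and unstable when $d''(\omega)<0$. From the explicit profile, $\mathcal V(\Phi_\omega)=\tfrac{1+\mu^2}{2}\int_{\cR}\bigl(\phi^2+(\phi')^2\bigr)\,dx=c\bigl[\,C_1(\omega-1)^{2/p-1/2}\omega^{1/2}+C_2(\omega-1)^{2/p+1/2}\omega^{-1/2}\,\bigr]$ with $c,C_1,C_2>0$ (here $C_1=\int_{\cR}\sech^{4/p}y\,dy$ and $C_2=\int_{\cR}\sech^{4/p}y\,(1-\sech^2y)\,dy>0$). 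Writing $s=\omega-1>0$ and $a=\tfrac2p-\tfrac12$, a short computation shows $\tfrac{d}{d\omega}\mathcal V(\Phi_\omega)$ has the sign of the quadratic $\Psi(s)=C_1a+\bigl[\,C_1(2a+\tfrac12)+C_2(a+1)\,\bigr]s+(C_1+C_2)\tfrac2p\,s^2$. For $p\le4$ we have $a\ge0$, so $\Psi(0)\ge0$, the $s$--coefficient equals $C_1(\tfrac4p-\tfrac12)+C_2(\tfrac2p+\tfrac12)>0$, and the $s^2$--coefficient is positive; thus $\Psi(s)>0$ for all $s>0$, i.e.\ $d''(\omega)>0$ for every $\omega>1$, which is (1). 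For $p>4$ we have $\Psi(0)=C_1a<0$ while the leading coefficient $(C_1+C_2)\tfrac2p>0$, so $\Psi$ has a single positive root $s_p$ and changes from negative to positive there; with $\omega_p:=1+s_p>1$ this gives instability for $1<\omega<\omega_p$ and stability for $\omega>\omega_p$, which is (2).

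The step I expect to be the main obstacle is the second paragraph: obtaining the \emph{sharp} count --- exactly one negative eigenvalue and a one--dimensional kernel --- for the vector operator $\mathbf L_\omega$, rather than a crude upper bound on its negative eigenvalues. This is precisely where $\det(\cM)<\tfrac1{p+1}$ enters: $\tfrac1{p+1}$ is the exact threshold below which the transverse block $L_{\det(\cM)}$ stays positive definite (at $\lambda=\tfrac1{p+1}$ it first acquires a kernel), and it is the P\"oschl--Teller point--spectrum information, in the spirit of \cite{BCK15}, that makes this count available and thereby improves on the coarser spectral estimates underlying \cite{Per05}.
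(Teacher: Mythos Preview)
Your proposal is correct and follows essentially the same route as the paper: diagonalise $\cM$ (eigenvalues $1$ and $\det\cM$) to split $\cL$ into two scalar P\"oschl--Teller operators, use the explicit point spectrum to get exactly one negative eigenvalue and a simple kernel spanned by $\Phi_x$ precisely when $\det(\cM)<\tfrac{1}{p+1}$, and then compute $d''(\omega)=\tfrac{d}{d\omega}\Omega(\Phi_\omega)$ from the explicit $\sech^{2/p}$ profile, reducing its sign to that of a quadratic. The only cosmetic differences are that you invoke Euler's identity for homogeneous functions to see $(1,\mu)^t$ is an eigenvector of $\cM$ (the paper verifies this by direct differentiation in Lemma~\ref{hessian1}), and you write the final quadratic in the variable $s=\omega-1$ rather than in $\omega$; the content is the same.
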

\begin{remark}Secret-telling moments.
\begin{enumerate} 
\item In this note, we only consider the proportional solitary wave solution. Nevertheless, as the study of nonlinear Schr$\ddot{o}$dinger system suggests \cite{Yang97}, non-proportional solitary wave solutions do exist and are of great importance in real applications. 
\item Given $H$, one can easily compute $\det(\cM)$ and determine the stability of a solitary wave. This idea was first brought up in \cite{BCK15}. The accurate information on eigenvalues is the key to discover the condition under which the stability is achieved.
\end{enumerate}
\end{remark}

\subsection{Strategy}

Since Benjamin \cite{Ben72} set up the rigid mathematical framework, with refinements, generalization and elaboration along the way \cite{Bona75, SS85, Weinstein86, Weinstein87, GSS87, ABH87, BSS87, PW94} etc., stability study in this direction has developed into a sophisticate but ``routine" process. On the one hand, it has close relations with areas such as scattering/inverse scattering theory, spectrum theory, hamiltonian systems, etc., see \cite{Pava09}; on the other hand, the classical method more or less follows a routine process in proving the stability of a solitary wave solution, $\phi$ for instance: 
\begin{itemize}
\item formulate the proper metric $||\cdot||$ to measure the shape deviation;
\item capture a pair of conserved quantities to form a Lyapunov functional $L$, which is invariant too; 
\item for any solution $u$ with perturbation $v=u-\phi$, investigate $L(u)-L(\phi)$ and show there exist positive constants $C_1, C_2$ such that for all time,
\begin{equation}\label{mainineq1}
C_1||v||^2+o(||v||^2)\leq L(u)-L(\phi)\leq C_2||v||^2+o(||v||^2).
\end{equation}
\end{itemize}

Then the second inequality at time $t=0$ implies that $L(u)-L(\phi)$ is bounded by initial deviation $||v(0)||^2=||u(0)-\phi||^2)$, at least when $v$ is small; the first inequality at all later time implies all deviations $||v||^2$ are bounded by $L(u)-L(\phi)$. The fact that $L(u)-L(\phi)$ is invariant gives the stability: the initial deviation (shape difference) bounds the future deviation, at least when the initial deviation is small. 

This clear picture does not show up without subtle issues. The global well-posedness needs to be ready; the key player-a solitary wave solution has to exist; to maintain an effective norm, the infimum of the deviation has to happen within a finite translation\cite{Bona75}; how to transfer a constrained result to a general one,  etc. But the most delicate part is the establishment of inequality (\ref{mainineq1}).

In estimating $L(u)-L(\phi)$, a self-adjoint operator $\cL$ shows up. Roughly, we have $L(u)-L(\phi)=<\cL v, v>+o(||v||^2)$, where $<\cdot,\cdot>$ denotes the adjoint pair on $H^{-1}(\cR)\times H^1(\cR)$. The upper bound, $L(u)-L(\phi)\leq C_2||v||^2+o(||v||^2)$, is easy to show, since $\cL$ is bounded operator from $H^1(\cR)$ to $H^{-1}(\cR)$. The lower bound requires certain amount of ``positiveness'' of $\cL$. The operator is generally not positive definite, but almost so: the residual spectrum is missing since the operator is self-adjoint; it has strictly positive continuous spectrum; it has a simple zero eigenvalue and a unique, simple, negative eigenvalue.  Thus, the space under investigation is able to be decomposed into a direct product of certain subspaces and the major part of the positiveness is kept, so is the positiveness of the lower order term $||v||^2$. 

This classical method calls for a pack of techniques that attach to the analysis of such an operator. To avoid these difficulties, the concentration-compactness method has been developed by Cazenave and Lions from the calculus of variation viewpoint, see \cite{CL82, Lions84}.

We shall embrace the tradition and follow the process mentioned above. But as has been remarked before, we shall omit some ``critical but routine" steps and focus on the those parts which reflect the improvements we provide: the accurate spectrum analysis on the Schr$\ddot{o}$dinger operator; the calculation of a critical quantity $d''(\omega)$. Some examples are given in the end.

\subsection{Preliminaries}

\subsubsection{Point spectrum of Schr$\ddot{o}$dinger operator}
The result in this work relies on the precise information on the point spectrum of a specific Schr$\ddot{o}$dinger operator,
$\cL_0=-\Delta+V(x)$ in one dimensional space, where the Laplacian $\Delta={d^2\over dx^2}$ and $V$ is a potential function. In the special case where the potential function $V(x)=-\alpha\sech^2(x), \alpha>0$, the eigenvalue problem can be solved explicitly; see Landau etal. \cite{LL77}, page 73-74. Here for sake of completeness, we reproduce some details in this aspect.

On space $L^2(\cR)$, this $\phi(x)$ make $\cL_0$ a self-adjoint operator. Let $\sigma$ denote the full spectrum and let $\sigma_p, \sigma_c, \sigma_r$ be the point spectrum, the continuous spectrum and residual spectrum, respectively. For self-adjoint operator the residual spectrum is missing, we have $\sigma(\cL_0)=\sigma_p(\cL_0)\cup\sigma_c(\cL_0)$.
Moreover, 

\begin{lemma}\label{pavalemma0}[Pava \cite{Pava09}] we have
\begin{enumerate}
\item $-{d^2\over dx^2}$ has spectrum $\sigma=\sigma_c=[0,\infty)$.
\item $-{d^2\over dx^2}+\omega$ has spectrum $\sigma=\sigma_c=[\omega,\infty)$.
\item $\cL_0=-{d^2\over dx^2}+\omega-\alpha\sech^2(x)$, being self-adjoint on $L^2(\cR)$, has only real spectrum, which consists of the continuous part $[\omega,\infty)$, together with a finite number of discrete eigenvalues in $(-\infty,\omega)$.
\end{enumerate}
\end{lemma}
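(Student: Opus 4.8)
The plan is to handle the three statements in increasing order of difficulty, since each rests on the one before it. For (1) I would pass to the Fourier transform $\mathcal F$, under which $-{d^2\over dx^2}$ becomes multiplication by $\xi^2$ on $L^2(\cR)$. A real multiplication operator $m(\xi)\cdot$ has spectrum equal to the essential range of $m$, here $[0,\infty)$, and it has an eigenvalue at $\lambda$ only when $\{m=\lambda\}$ has positive Lebesgue measure; for $m(\xi)=\xi^2$ that set is a single point, so $\sigma_p=\emptyset$ and $\sigma=\sigma_c=[0,\infty)$. Statement (2) is then immediate: adding the bounded self-adjoint operator $\omega I$ translates the whole spectrum by $\omega$, giving $\sigma=\sigma_c=[\omega,\infty)$ with still no point spectrum.

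For (3) I would first record self-adjointness: $-{d^2\over dx^2}$ is essentially self-adjoint on $C_c^\infty(\cR)$ with domain $H^2(\cR)$, and $\omega-\alpha\,\sech^2(x)$ is a bounded real multiplication operator, so by the Kato--Rellich theorem $\cL_0$ is self-adjoint on $H^2(\cR)$; hence its spectrum is real and $\sigma_r(\cL_0)=\emptyset$. Next I would show that the perturbation is relatively compact: the operator $\sech^2(x)\,\bigl(-{d^2\over dx^2}+\omega+1\bigr)^{-1}$ is compact on $L^2(\cR)$, because it is the composition of multiplication by a function of $x$ tending to $0$ at infinity with multiplication (in Fourier variables) by $(\xi^2+\omega+1)^{-1}$, which also tends to $0$ at infinity — the standard criterion that $f(x)\,g(-i\,d/dx)$ is compact when $f,g\in L^\infty$ vanish at infinity. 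Weyl's theorem on invariance of the essential spectrum then yields $\sigma_{ess}(\cL_0)=\sigma_{ess}\bigl(-{d^2\over dx^2}+\omega\bigr)=[\omega,\infty)$. Since $-\alpha\,\sech^2(x)\le 0$, the min-max principle gives $\cL_0\le -{d^2\over dx^2}+\omega$, so no spectrum lies above $\omega$; everything in $(-\infty,\omega)$ is therefore isolated eigenvalues of finite multiplicity, i.e. genuine discrete spectrum.

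The remaining and essential point is that this discrete set is \emph{finite}. I see two routes. The soft route invokes a bound-state counting estimate for one-dimensional Schr\"odinger operators (a Bargmann/Calogero-type inequality), bounding the number of eigenvalues below $\omega$ by a constant times $\int_{\cR}(1+|x|)\,\alpha\,\sech^2(x)\,dx$, which is finite since $\sech^2$ decays exponentially. The explicit route, the one alluded to via \cite{LL77}, substitutes $\psi$ as a function of $\tanh x$, reducing $\cL_0\psi=\lambda\psi$ to an associated Legendre / hypergeometric equation; writing $\alpha=s(s+1)$ with $s>0$, the $L^2$ solutions occur precisely for $\lambda=\omega-(s-n)^2$ with $n$ a nonnegative integer and $s-n>0$, hence at most $\lfloor s\rfloor+1$ eigenvalues. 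Either way finiteness follows. I expect the compactness/Weyl step to be the cleanest part and the finiteness count to carry the real content; since the statement is quoted from \cite{Pava09}, in the write-up I would cite the bound-state estimate rather than reproduce the hypergeometric reduction in full, deferring the explicit eigenvalue computation to the concrete potential $\phi$ arising from the solitary wave.
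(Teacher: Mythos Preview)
The paper does not actually prove this lemma: it is stated with attribution to Pava's monograph \cite{Pava09} and used as a black box, with the subsequent text (the change of variable $\xi=\tanh x$ and the associated Legendre reduction) serving only to compute the explicit eigenpairs, not to establish the qualitative spectral picture. So there is no ``paper's own proof'' to compare against; your write-up would in fact be supplying what the paper omits.

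Your argument is sound and is the standard one: Fourier diagonalisation for (1)--(2), Kato--Rellich for self-adjointness, relative compactness of the $\sech^2$ perturbation plus Weyl's theorem for $\sigma_{ess}=[\omega,\infty)$, and either a Bargmann-type bound or the explicit Legendre reduction for finiteness of the discrete spectrum. One sentence needs repair: after invoking Weyl you write ``$\cL_0\le -{d^2\over dx^2}+\omega$, so no spectrum lies above $\omega$''. Taken literally this is false, since $[\omega,\infty)\subset\sigma(\cL_0)$; and the operator inequality does not by itself rule out embedded eigenvalues in $[\omega,\infty)$. What you presumably mean is that the spectrum \emph{outside} $\sigma_{ess}$ lies in $(-\infty,\omega)$, which is immediate from $\sigma_{ess}=[\omega,\infty)$ and needs no min-max input. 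If you want to assert the absence of embedded eigenvalues as well (the lemma's phrasing suggests this), the cleanest justification here is the explicit hypergeometric solution, which exhibits all $L^2$ eigenfunctions; otherwise drop that implicit claim, since the later applications in the paper only use the discrete eigenvalues below $\omega$.
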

To find the eigenvalues of $\cL_0$, we shall solve
\begin{eqnarray}\label{maineigval}
-{d^2\over dx^2}\psi(x)-\alpha~ \sech^2(x)\psi(x)=-\lambda\psi(x).
\end{eqnarray}
Make variable transformation $\xi=\tanh(x)$, hence $\xi\in(-1,1)$ for $x\in(-\infty,\infty)$, we have the the associated Legendre equation for transformed function $ \tilde{\psi}(\xi)\equiv\psi(\tanh^{-1}(\xi))$:
\begin{eqnarray}\label{eigval1}
{d\over d\xi}\Big[(1-\xi^2){{d\over d\xi}}\Big]\tilde{\psi}(\xi)+\Big[\alpha-{\lambda\over 1-\xi^2}\Big]\tilde{\psi}(\xi)=0.
\end{eqnarray}

Note that we work in space $L^2(\cR)$, which require $\psi(x)\rightarrow 0, \mbox{ as } x\rightarrow \pm \infty$, or $$\tilde{\psi}(\xi)\rightarrow 0, \mbox{ as } \xi\rightarrow \pm 1.$$

Letting 
\begin{equation}\label{es}
\epsilon=\sqrt{\lambda}, ~ ~s(s+1)=\alpha, ~ s>0, (\mbox{hence } s={1\over 2}\Big(-1+\sqrt{1+4\alpha}
\Big))
\end{equation} and putting the equation in form of 
\begin{eqnarray}\label{eigval2}
{d\over d\xi}\Big[(1-\xi^2){{d\over d\xi}}\Big]\tilde{\psi}(\xi)+\Big[s(s+1)-{\epsilon^2\over 1-\xi^2}\Big]\tilde{\psi}(\xi)=0,
\end{eqnarray}
we can solve the eigenvalue problem (\ref{eigval2}) completely:
\begin{eqnarray}\label{eigpair1}
\lambda_n&=&-\epsilon_n^2,~   ~\epsilon_n=s-\lceil s\rceil+n, ~ n=1, 2, \cdots, \lceil s\rceil, \nonumber\\
\tilde{\psi}_n(\xi)&=&P_s^{\epsilon_n}(\xi),
\end{eqnarray}
where $\lceil s\rceil$ is the smallest integer greater than or equal to $s$, and $P_s^{\epsilon_n}(\xi),$ is are {\it the associated Legendre functions of the first kind}, meaning the functions are defined within the unit circle in the complex plane. By analytical continuation, Legendre functions of second and third kind can be defined outside of the unit circle. But they do not interest us here.

When $s$ is a positive integer, $\epsilon=1, 2,\cdots, s$ and $P_s^\epsilon$ becomes {\it the Legendre polynomial} which has an explicit form, for example when $s=3$
\begin{eqnarray}
P_3^1(\xi) &= \tfrac{1}{2}(1-\xi^2)^{1/2} (15\xi^2-3), \nonumber\\ 
P_3^2(\xi) &= 15(1-\xi^2)\xi, \nonumber\\ 
P_3^3(\xi) &= 15 (1-\xi^2)^{3/2}.\nonumber 
\end{eqnarray}
The solutions for the corresponding eigenvalue problem (\ref{maineigval}) are
\begin{eqnarray}
\lambda_1&=&-1,\nonumber\\
\psi_1(x)&=&P_3^1(\tanh(x))= 3/2 (1-5 \tanh^2(x)) \sech(x); \nonumber\\\nonumber \\
\lambda_2&=&-4,\nonumber\\
\psi_2(x)&=&P_3^2(\tanh(x))= 15 \tanh(x) \sech^2(x); \nonumber\\ \nonumber\\
\lambda_3&=&-9,\nonumber\\
\psi_3(x)&=&P_3^3(\tanh(x))= -15 \sech^3(x). \nonumber
\end{eqnarray}

%

When $s$ is not an integer, the expressions of $P_s^\epsilon(\xi)$ can be put in form of hypergeometric function (page 74, \cite{LL77}), 
\begin{equation}\label{hyperg1}
P_s^{\epsilon_n}(\xi)=(1-\xi^2)^{{\epsilon_n}\over 2}F[\epsilon_n-s,\epsilon_n+s+1,\epsilon_n+1,{1\over 2}(1-\xi)],
\end{equation}
where the hyper-geometric function $F$ is defined as
\begin{equation}\label{hyperg2}
F[-m,\beta,\gamma,z]={z^{1-\gamma}(1-z)^{\gamma+m-\beta}\over\gamma(\gamma+1)\cdots(\gamma+m-1)}{d^m\over dz^m}[z^{\gamma+m-1}(1-z)^{\beta-\gamma}].
\end{equation}

Again here what concerns us is the information of the concrete eigenvalues, rather than the eigenfunctions.

Another result will be used later.
\begin{lemma}[Theorem B.61,\cite{Pava09}]\label{pavalemma}
Suppose that $\phi\in L^2(\cR)$ satisfies the differential equation $$-\phi''+c\phi-f(\phi)=0,$$
with $c > 0$ and $\phi'$ having exactly one unique zero. Then the differential operator
$$L_0=-{d^2\over dx^2}+[c-f'(\phi)]$$
defined in $L^2(\cR)$ has exactly one simple negative eigenvalue $\lambda_0$; the eigenvalue $0$ is simple with associated eigenfunction $\phi'$; and there exists $\delta>0$ such that every $\lambda\in\sigma(L_0)-\{\lambda_0,0\}$ satisfies $\lambda>\delta$.
\end{lemma}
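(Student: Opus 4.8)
The plan is to treat $L_0$ as a one-dimensional Schr\"odinger operator and to combine Sturm--Liouville oscillation theory with Weyl's theorem on the essential spectrum. First I would record the algebraic fact that $\phi'$ is a zero mode: differentiating the profile equation $-\phi''+c\phi-f(\phi)=0$ in $x$ yields $-(\phi')''+[c-f'(\phi)]\phi'=0$, i.e. $L_0\phi'=0$, so $0\in\sigma(L_0)$ with eigenfunction $\phi'$. Next I would locate the essential spectrum. Since $\phi\in L^2(\cR)$ solves the profile equation, standard ODE decay estimates force $\phi(x)\to 0$ as $x\to\pm\infty$, and because $f$ is smooth with $f(0)=0$ (hence $f'(0)=0$ for the homogeneous nonlinearities at hand) the potential satisfies $c-f'(\phi(x))\to c$. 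A relatively compact perturbation argument (Weyl's theorem) then gives $\sigma_{\mathrm{ess}}(L_0)=[c,\infty)$, and since $c>0$ this part of the spectrum is bounded away from $0$.

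The heart of the argument is the counting step. Everything below $c$ is discrete spectrum consisting of finitely many eigenvalues, each simple: simplicity is automatic for a one-dimensional Schr\"odinger operator, because the eigenvalue equation $L_0\psi=\lambda\psi$ is a second-order linear ODE and the requirement that $\psi$ decay at both $+\infty$ and $-\infty$ cuts the two-dimensional solution space down to at most one dimension. I would then invoke Sturm oscillation: ordering the discrete eigenvalues as $\lambda_0<\lambda_1<\cdots<c$, the eigenfunction attached to $\lambda_k$ has exactly $k$ zeros on $\cR$ (in particular the ground state $\lambda_0$ is nodeless). The hypothesis that $\phi'$ has exactly one zero therefore identifies $\phi'$ as the first excited state, forcing the associated eigenvalue $0$ to be $\lambda_1$. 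Consequently there is exactly one eigenvalue strictly below it, the ground state $\lambda_0<0$, and it is simple.

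Finally I would extract the spectral gap. Because the discrete eigenvalues are strictly ordered and the continuous spectrum begins at $c>0$, the portion of $\sigma(L_0)$ lying above the zero eigenvalue is contained in $\{\lambda_2,\lambda_3,\dots\}\cup[c,\infty)$, with each $\lambda_k>0$ for $k\geq 2$ whenever such eigenvalues exist. This set is separated from $0$, so choosing any $\delta$ with $0<\delta<\min\{\lambda_2,c\}$ (interpreting the minimum as $c$ when no $\lambda_2$ exists) yields $\lambda>\delta$ for every $\lambda\in\sigma(L_0)\setminus\{\lambda_0,0\}$, completing the proof.

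I expect the main obstacle to be the rigorous deployment of Sturm oscillation theory on the whole line rather than on a bounded interval with prescribed boundary conditions: the clean ``$k$-th eigenfunction has exactly $k$ zeros'' statement is classical on a compact interval, and transferring it to $\cR$ requires controlling the decaying solutions at $\pm\infty$ and ruling out oscillation escaping to infinity. A secondary technical point is justifying $f'(\phi)\to 0$ together with the relative compactness needed for Weyl's theorem, which hinges on the precise exponential decay of $\phi$ extracted from the profile equation.
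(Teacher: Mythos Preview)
The paper does not supply its own proof of this lemma: it is quoted verbatim as Theorem~B.61 from \cite{Pava09} and used as a black box. So there is nothing in the paper to compare your argument against.

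That said, your sketch is the standard route to this result and is essentially how the proof in \cite{Pava09} proceeds: differentiate the profile equation to exhibit $\phi'$ as a zero mode, use Weyl's theorem to push the essential spectrum to $[c,\infty)$, and then invoke Sturm oscillation on the line to identify the zero eigenvalue as $\lambda_1$ from the single sign change of $\phi'$. Your own caveats are the right ones. The whole-line oscillation statement does require care (one typically argues via the Sturm comparison/separation theorems applied to the decaying Jost-type solutions, or restricts to large boxes and passes to the limit), and the Weyl step needs $f'(\phi(x))\to 0$; note that the lemma as stated does not literally assume $f'(0)=0$, so strictly speaking this is an implicit hypothesis rather than something you can derive from homogeneity of $f$ alone. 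With those points acknowledged, the argument is sound.
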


\subsubsection{Scalar equation}

In this part, by comparing with the scalar equation, we single out the difficulties brought up by the coupling mechanism and focus on these new issues. 

Consider a {\it scalar} BBM equation 
\begin{equation}\label{scalar}
u_t+u_x-u_{xxt}+\hbar u^pu_x=0,
\end{equation}
where $\hbar>0$ is a constant. There exist infinitely many conserved quantities, two of which play an important role in our analysis:
\begin{eqnarray}\label{consv0}
\Omega_0(u)&=&{1\over 2}\int_{\cR} u^2+u_x^2dx, ~\nonumber\\
\Theta_0(u)&=&-{1\over 2}\int_{\cR} u^2+{2\hbar\over (p+2)(p+1)}u^{p+2}dx.
\end{eqnarray}

We assume the solitary wave solution exists in form of $\phi_\omega(x-\omega t)$. Then let $z=x-\omega t, \phi(z)=\phi_\omega(x-\omega t)$, substitute into (\ref{scalar}), we have $$-\omega \phi'+\phi'+\omega\phi'''+({\hbar\over p+1}\phi^{p+1})'=0.$$

Requiring $\phi(z)$ decreases to zero at infinity, we have equation associated with (\ref{scalar})
\begin{equation}\label{scalar1}
-(\omega-1) \phi+\omega\phi''+{\hbar\over p+1}\phi^{p+1}=0.
\end{equation}
When $\omega>1$, the solution to (\ref{scalar1}), which is
$$\phi(z)=\Big[{(p+2)(\omega-1)\over 2\hbar} \sech^2({p\over 2}\sqrt{\omega-1\over\omega}z)\Big]^{1\over p},$$ generates the solitary wave solution $\phi_\omega(x-\omega t)$ to (\ref{scalar}).

Taking derivative of (\ref{scalar1}), we have 
$$-(\omega-1) \phi'+\omega\phi'''+\hbar\phi^p\phi'=0,$$
from which we conclude the operator 
$$\cL_{sclr}=-\omega{d^2\over dx^2}+(\omega-1)-\hbar\phi^p$$
has eigenfunction $\phi'$ corresponding to eigenvalue $0$. 

In view of $\omega>1$ and $\phi$ being in shape of $[\sech^2(\cdot)]^{1\over p}$,  the above is in fact the consequence of Lemma \ref{pavalemma}. 
Furthermore, we conclude 

1. $\cL_{sclr}$ has exactly one simple negative eigenvalue $\lambda_0$; 

2. the eigenvalue $0$ of $\cL_{sclr}$ is simple; 

3. and there exists $\delta>0$ such that every $\lambda\in\sigma(\cL_{sclr})-\{\lambda_0,0\}$ satisfies $\lambda>\delta$.

Let $$d(\omega)=\Theta_0(\phi_\omega)+\omega\Omega_0(\phi_\omega).$$
The classical result is, the main inequality (\ref{mainineq1}) will be established if $$d''(\omega)>0$$  holds, however, under the constraint 
$$\Omega_0(u)=\Omega_0(\phi_\omega)= Const.$$
Then an extra argument (omitted in this note) shall bridge this constrained case to the general case and Theorem \ref{mainthm} would then be completed.

\begin{remark}
For a general solitary wave solution, there is a requirement on the existence of an interval of $\omega$, on which $d(\omega)$ has certain smoothness. It is not a problem for the special solitary solution $\phi$ here.

\end{remark}

Four the coupled system (\ref{main1}), we shall walk through the same procedure, however, with conserved quantities and operator in different forms. And the main difficulty comes from the spectrum analysis of the new operator.

\subsubsection{Coupled system}

Consider (\ref{main1}), $$U_t+U_x-U_{xxt}+(\nabla H(U))_x=0,$$ 
We have conserved quantities
$$\Omega(U)={1\over 2}\int_{\cR}U^tU+U_x^tU_xdx,$$
$$\Theta(U)=-{1\over 2}\int_{\cR}U^tU+2H(U)dx.$$

We consider the proportional solitary wave solution $U(x,t)=\Phi_\omega(x-\omega t)=(\phi_\omega(x-\omega t), \psi_\omega(x-\omega t))^t$ of system (\ref{main1}), where $\psi_\omega=\mu\phi_\omega$ with $\mu$ a real constant.
Later on, we often write $\phi, \psi$ for $\phi_\omega(x-\omega t), \psi_\omega(x-\omega t)$ when it is in a clear context.

Substituting into the system (\ref{main1}) and demanding a solitary solution, we have 
$$H_v(1,\mu)=\mu H_u(1,\mu),$$ and
\begin{equation}\label{soli1}
-\omega\phi''+(\omega-1)\phi-H_u(1,\mu)\phi^{p+1}=0.
\end{equation}

By (\ref{scalar1}), the solution of (\ref{soli1}) is found to be
\begin{equation}\label{phi}
\phi=\Big[{(p+2)(\omega-1)\over 2H_u(1,\mu)} \sech^2({p\over 2}\sqrt{\omega-1\over\omega}x)\Big]^{1\over p}.
\end{equation}

Linearizing (\ref{main1}) around $\Phi=(\phi, \mu\phi)^t$, we have 
$$\cL=\Big(-\omega{d^2\over dx^2}+(\omega-1)\Big)I_{2\times 2}-\nabla^2H_\omega,$$
where $I_{2\times 2}=\left( \begin{array}{cc} 1 & 0 \\
           0& 1 \end{array}\right) \mbox{ and } \nabla^2H_\omega=\left( \begin{array}{cc} H_{uu}(\phi,\psi) & H_{uv}(\phi,\psi) \\
           H_{uv}(\phi,\psi) & H_{vv}(\phi,\psi) \end{array}\right).$\\ \\
\textbf{ Now our mission in this work is to }

\begin{enumerate}
\item analyze $\cL$ and prove 
\begin{itemize}\label{item1}
\item it has exactly one simple negative eigenvalue $\lambda_-$; 
\item the eigenvalue $0$ of $\cL$ is simple; 
\item the continuous spectrum of $\cL$ is bounded below by a positive number.
\end{itemize}

\item show $d''(\omega)>0$ for $d(\omega)=\Theta(\Phi_\omega)+\omega\Omega(\Phi_\omega)$.
\end{enumerate}

\section{Analysis on $\cL$}
\subsection{Eigenvalue and unitary transformation}
We have
$$\nabla^2H_\omega=\left( \begin{array}{cc} H_{uu}(\phi,\psi) & H_{uv}(\phi,\psi) \\
           H_{uv}(\phi,\psi) & H_{vv}(\phi,\psi) \end{array}\right)$$
$$=\phi^p\left( \begin{array}{cc} H_{uu}(1,\mu) & H_{uv}(1,\mu) \\
           H_{uv}(1,\mu) & H_{vv}(1,\mu) \end{array}\right)$$
$$=\phi_0\cM,$$ 
with $$\phi_0={{1\over 2}(p+1)(p+2)(\omega-1)} \sech^2({p\over 2}\sqrt{\omega-1\over\omega}x),$$ and
\begin{equation}\label{cM}
\cM={1\over (p+1)H_u(1,\mu)}\left( \begin{array}{cc} H_{uu}(1,\mu) & H_{uv}(1,\mu) \\
           H_{uv}(1,\mu) & H_{vv}(1,\mu) \end{array}\right).
\end{equation}

We expect the existence of a unitary transformation, by which we can simplify $\cM$ so that it is easier to analyze the spectrum, noting the unitary transformation does not change the spectrum of an operator. Since $\cM$ is real symmetric, it suffices to find the eigenvalues of $\cM$; if we can find two eigenvalues of $\cM$, we find the unitary transformation matrix. 
           
\begin{lemma}
The matrix $\cM$ has two eigenvalues, which are $\lambda_1=1, \lambda_2=\det \cM$.
\end{lemma}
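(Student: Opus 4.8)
The plan is to exhibit an explicit eigenvector of $\cM$ with eigenvalue $1$ and then read off the second eigenvalue from the determinant. Since $\cM$ is a $2\times 2$ real symmetric matrix, it is diagonalizable with two real eigenvalues $\lambda_1,\lambda_2$ satisfying $\lambda_1\lambda_2=\det\cM$ and $\lambda_1+\lambda_2=\operatorname{tr}\cM$. Hence it suffices to prove that $1$ is an eigenvalue of $\cM$; then $\lambda_2=\det\cM$ follows immediately, and as a consistency check the characteristic polynomial factors as $(\lambda-1)(\lambda-\det\cM)$, which forces $\operatorname{tr}\cM=1+\det\cM$.

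First I would record the consequences of homogeneity. Since $H$ is homogeneous of degree $p+2$, its first partials $H_u$ and $H_v$ are homogeneous of degree $p+1$, so Euler's identity applied to each of them gives, for all $(u,v)$,
\[
u\,H_{uu}(u,v)+v\,H_{uv}(u,v)=(p+1)H_u(u,v),\qquad u\,H_{uv}(u,v)+v\,H_{vv}(u,v)=(p+1)H_v(u,v).
\]
Evaluating at $(u,v)=(1,\mu)$ and invoking the proportionality relation $H_v(1,\mu)=\mu H_u(1,\mu)$ recorded above (the algebraic condition that makes $\Phi=(\phi,\mu\phi)^t$ a solution of \eqref{main1}), the right-hand sides become $(p+1)H_u(1,\mu)$ and $(p+1)\mu H_u(1,\mu)$.

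Next I would simply apply $\cM$ to the vector $(1,\mu)^t$. Using the definition \eqref{cM} of $\cM$ together with the two identities just obtained,
\[
\cM\begin{pmatrix}1\\ \mu\end{pmatrix}=\frac{1}{(p+1)H_u(1,\mu)}\begin{pmatrix}H_{uu}(1,\mu)+\mu H_{uv}(1,\mu)\\ H_{uv}(1,\mu)+\mu H_{vv}(1,\mu)\end{pmatrix}=\frac{1}{(p+1)H_u(1,\mu)}\begin{pmatrix}(p+1)H_u(1,\mu)\\ (p+1)\mu H_u(1,\mu)\end{pmatrix}=\begin{pmatrix}1\\ \mu\end{pmatrix}.
\]
Here the assumption $H_u(1,\mu)>0$ guarantees the scalar prefactor is well defined and nonzero, and $(1,\mu)^t$ is a genuine (nonzero) eigenvector since its first component is $1$. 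Therefore $1\in\sigma(\cM)$, and combining with $\lambda_1\lambda_2=\det\cM$ finishes the proof.

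The only point that needs care is the homogeneity bookkeeping in the second step: one must differentiate Euler's identity for $H$ correctly, remember that the degree drops to $p+1$ for the components of $\nabla H$, and crucially use the proportionality constraint $H_v(1,\mu)=\mu H_u(1,\mu)$ — without that relation the vector $(1,\mu)^t$ need not be an eigenvector of $\cM$ at all. Once these identities are in hand, the statement is pure $2\times 2$ linear algebra, so I do not anticipate any real obstacle.
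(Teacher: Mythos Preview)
Your proof is correct and rests on the same two identities the paper isolates as Lemma~\ref{hessian1}, namely $H_{uu}(1,\mu)+\mu H_{uv}(1,\mu)=(p+1)H_u(1,\mu)$ and $H_{uv}(1,\mu)+\mu H_{vv}(1,\mu)=(p+1)\mu H_u(1,\mu)$; once those are in hand, both arguments are the same $2\times 2$ linear algebra. The differences are purely cosmetic: you obtain the identities via Euler's relation for the degree-$(p+1)$ functions $H_u,H_v$ (which is cleaner and does not require writing $H$ as a polynomial), and you then apply $\cM$ directly to $(1,\mu)^t$ to read off the eigenvalue $1$, whereas the paper verifies the identities by expanding $H$ as a homogeneous polynomial and then performs the equivalent column operation on $\det(\cM-\lambda I)$ to factor out $(1-\lambda)$. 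Your route is marginally more conceptual and applies verbatim to any $C^2$ homogeneous $H$; the paper's route makes the factorization of the characteristic polynomial explicit. Neither buys anything the other does not.
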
           
\begin{proof} From (\ref{cM}), we get
$$\det(\cM-\lambda I)={1\over (p+1)^2H^2_u(1,\mu)}\det(\tilde{\cM}),$$
$$\det(\tilde{\cM})=\det\left( \begin{array}{cc} H_{uu}-\lambda (p+1)H_u & H_{uv} \\
           H_{uv} & H_{vv}-\lambda(p+1) H_u \end{array}\right).$$
We use the following result which shall be proved presently in Lemma \ref{hessian1}
$$H_{uu}+\mu H_{uv}=(p+1) H_u,$$
$$H_{uv}+\mu H_{vv}=(p+1)\mu H_u.$$
Multiplying the second column of $\tilde{\cM}$ and adding to the first one, we have
$$\det(\cM-\lambda I)={(1-\lambda)\over (p+1)H_u(1,\mu)}\det\left( \begin{array}{cc}  1& H_{uv} \\
           \mu & H_{vv}-\lambda(p+1)H_u \end{array}\right).$$
Solving $\det(\cM-\lambda I)=0$, we get $\lambda_1=1, \lambda_2=\det(\cM)/\lambda_1$, as the lemma states.
\end{proof}       
\begin{lemma}\label{hessian1}
 For $H$ defined above, we have
$$H_{uu}(1,\mu)+\mu H_{uv}(1,\mu)=(p+1) H_u(1,\mu),$$ and
$$H_{uv}(1,\mu)+\mu H_{vv}(1,\mu)=(p+1)\mu H_u(1,\mu).$$
\end{lemma}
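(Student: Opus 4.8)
The plan is to derive both identities from Euler's relation for homogeneous functions, applied not to $H$ itself but to its first-order partials. Recall that since $H$ is $C^3$ and homogeneous of degree $p+2$, each of $H_u$ and $H_v$ is $C^2$ and homogeneous of degree $p+1$. Euler's identity for a homogeneous function $G$ of degree $q$ reads $uG_u+vG_v=qG$; alternatively one can obtain the same conclusion by differentiating the identity $uH_u+vH_v=(p+2)H$ with respect to $u$ and to $v$.

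First I would apply Euler's identity to $G=H_u$ (degree $p+1$), which gives $uH_{uu}+vH_{uv}=(p+1)H_u$ as functions on $\cR^2$. Evaluating at $(u,v)=(1,\mu)$ yields the first claimed identity, $H_{uu}(1,\mu)+\mu H_{uv}(1,\mu)=(p+1)H_u(1,\mu)$. Next I would apply Euler's identity to $G=H_v$ (also degree $p+1$), obtaining $uH_{uv}+vH_{vv}=(p+1)H_v$; evaluating at $(1,\mu)$ gives $H_{uv}(1,\mu)+\mu H_{vv}(1,\mu)=(p+1)H_v(1,\mu)$.

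To finish the second identity I would invoke the proportionality relation established earlier when substituting $\Phi=(\phi,\mu\phi)^t$ into (\ref{main1}), namely $H_v(1,\mu)=\mu H_u(1,\mu)$. Substituting this into the right-hand side of the previous display converts it into $(p+1)\mu H_u(1,\mu)$, which is exactly the second claimed identity.

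There is essentially no serious obstacle here: the only point requiring a word of care is the justification that one may apply Euler's identity to the partials $H_u,H_v$ — this is immediate from the $C^3$ hypothesis on $H$ together with the fact that a partial derivative of a homogeneous function of degree $p+2$ is homogeneous of degree $p+1$ — and the bookkeeping of evaluating at the point $(1,\mu)$ and using $H_v(1,\mu)=\mu H_u(1,\mu)$. I would state the proof in two short lines accordingly.
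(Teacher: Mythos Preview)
Your proof is correct and takes a somewhat different route from the paper's. The paper's argument writes $H(u,v)=\sum_{i=0}^{p+2}C_iu^iv^{p+2-i}$ explicitly as a homogeneous polynomial and then verifies both identities by direct differentiation and substitution. Your argument instead applies Euler's relation to the degree-$(p+1)$ homogeneous functions $H_u$ and $H_v$, and then invokes the proportionality relation $H_v(1,\mu)=\mu H_u(1,\mu)$ for the second identity. Your approach is cleaner and slightly more general: it uses only the $C^3$ homogeneity hypothesis actually stated for $H$, whereas the paper's verification tacitly restricts to the polynomial case. The paper's computation, on the other hand, is entirely self-contained and does not require recalling Euler's identity or the earlier relation $H_v(1,\mu)=\mu H_u(1,\mu)$. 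Either way the lemma is a two-line affair.
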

\begin{proof}
Assume $H(u,v)=\sum\limits_{i=0}^{p+2}C_iu^iv^{p+2-i}$. The result can be easily verified by a few differentiations and substitutions.
\end{proof}           

Now we are able to form an orthogonal matrix $O$, by the normalized eigenvectors, such that
$$O^t\cM O=\left( \begin{array}{cc} 1 & 0 \\
0 & \det(\cM) \end{array}\right),$$
           hence           
           $$O^t\cL O=\Big(-\omega{d^2\over dx^2}+(\omega-1)\Big)I_{2\times 2}-\phi_0\left( \begin{array}{cc} 1 & 0 \\
0 & \det(\cM) \end{array}\right),$$
with $\phi_0(z)={(p+1)(p+2)(\omega-1)\over 2} \sech^2({p\over 2}\sqrt{\omega-1\over\omega}z).$

\subsection{Spectrum analysis}
We want to find out under what condition on $\cM$, operator $\cL$ satisfies the spectrum conditions stated at the end of the previous section.

Since $O$ is an orthogonal matrix, it suffices to study $\tilde{\cL}\equiv O^t\cL O$.
We want to prove
\begin{theorem}\label{mainthm2}
$\tilde{\cL}$ is a self-adjoint operator on $L^2(\cR)\times L^2(\cR)$. 
It has a unique simple negative eigenvalue when $\det(\cM)<{1\over p+1}$;
$0$ is the only other eigenvalue of $\tilde{\cL}$, which is simple; the continuous spectrum of $\tilde{\cL}$ is bounded 
from below by a positive number.
\end{theorem}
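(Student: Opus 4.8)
The plan is to use the fact that the orthogonal conjugation has already turned $\tilde{\cL}=O^t\cL O$ into a \emph{diagonal} matrix of scalar Schr\"odinger operators, so the two-component problem splits into two one-dimensional ones. Writing $\tilde{\cL}=\mathrm{diag}(\tilde{\cL}_1,\tilde{\cL}_2)$ with
$$\tilde{\cL}_1=-\omega\tfrac{d^2}{dx^2}+(\omega-1)-\phi_0,\qquad \tilde{\cL}_2=-\omega\tfrac{d^2}{dx^2}+(\omega-1)-\det(\cM)\,\phi_0,$$
each $\tilde{\cL}_j$ is $-\omega\,d^2/dx^2$ plus a bounded, smooth, exponentially decaying real potential, hence self-adjoint on $H^2(\cR)$ (equivalently, $\tilde{\cL}$ is self-adjoint since $\cL$ is and $O$ has constant entries); moreover $\sigma(\tilde{\cL})=\sigma(\tilde{\cL}_1)\cup\sigma(\tilde{\cL}_2)$, and by Lemma~\ref{pavalemma0} the continuous part of each $\tilde{\cL}_j$ is $[\omega-1,\infty)$. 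Recall $\omega>1$, so it remains only to locate the discrete spectra of $\tilde{\cL}_1$ and $\tilde{\cL}_2$.

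For $\tilde{\cL}_1$: since $\phi_0=(p+1)H_u(1,\mu)\,\phi^p$, dividing (\ref{soli1}) by $\omega$ puts $\phi$ in the form $-\phi''+c\phi-f(\phi)=0$ with $c=(\omega-1)/\omega>0$ and $f(t)=\tfrac{H_u(1,\mu)}{\omega}t^{p+1}$, and the profile (\ref{phi}) is even with $\phi'$ vanishing only at $x=0$. Hence Lemma~\ref{pavalemma} applies to $\tfrac1\omega\tilde{\cL}_1=-\tfrac{d^2}{dx^2}+[c-f'(\phi)]$: $\tilde{\cL}_1$ has exactly one simple negative eigenvalue $\lambda_-$, a simple eigenvalue $0$ with eigenfunction $\phi'$, and $\sigma(\tilde{\cL}_1)\setminus\{\lambda_-,0\}$ is bounded below by some $\delta_1>0$. (This $\tilde{\cL}_1$ is precisely the scalar operator $\cL_{sclr}$ with $\hbar=(p+1)H_u(1,\mu)$, so nothing new is needed here.)

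For $\tilde{\cL}_2$: if $\det(\cM)\le 0$ the potential $-\det(\cM)\phi_0$ is nonnegative, so $\tilde{\cL}_2\ge\omega-1>0$ and it has no negative and no zero eigenvalue. If $0<\det(\cM)<\tfrac1{p+1}$, rescale by $y=\beta x$ with $\beta=\tfrac p2\sqrt{\tfrac{\omega-1}\omega}$; since $\omega\beta^2=\tfrac{p^2(\omega-1)}4$ this converts the eigenvalue problem for $\tilde{\cL}_2$ into that for $\tfrac{p^2(\omega-1)}4\bigl(-\tfrac{d^2}{dy^2}+\tfrac4{p^2}-\alpha\,\sech^2 y\bigr)$ with $\alpha=\tfrac{2(p+1)(p+2)}{p^2}\det(\cM)$. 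By (\ref{es})--(\ref{eigpair1}) the discrete eigenvalues of $-\tfrac{d^2}{dy^2}-\alpha\,\sech^2y$ are $-\epsilon_n^2$ with $\epsilon_n\le s$ and $s(s+1)=\alpha$, so the discrete eigenvalues of $\tilde{\cL}_2$ are $\nu_n=(\omega-1)\bigl(1-\tfrac{p^2\epsilon_n^2}4\bigr)$, with $\nu_n>0\iff\epsilon_n<\tfrac2p$. The threshold computation is the crux: $\det(\cM)<\tfrac1{p+1}\iff\alpha<\tfrac{2(p+2)}{p^2}\iff s<\tfrac2p$, the last equivalence because $s$ is increasing in $\alpha$ and $s_*=\tfrac2p$ solves $s_*(s_*+1)=\tfrac{2(p+2)}{p^2}$. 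Hence, under the hypothesis, every $\epsilon_n\le s<\tfrac2p$, so every $\nu_n>0$; in fact $\tilde{\cL}_2\ge(\omega-1)\bigl(1-\tfrac{p^2s^2}4\bigr)>0$, so $\tilde{\cL}_2$ contributes neither a negative nor a zero eigenvalue.

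Putting the pieces together: the only negative eigenvalue of $\tilde{\cL}$ is $\lambda_-$, inherited from $\tilde{\cL}_1$ and simple; $0$ is an eigenvalue of $\tilde{\cL}$ coming only from $\tilde{\cL}_1$, hence simple with eigenfunction $(\phi',0)^t$; and $\sigma(\tilde{\cL})\setminus\{\lambda_-,0\}$, which is $[\omega-1,\infty)$ together with finitely many positive eigenvalues, is bounded below by $\min\{\delta_1,\ (\omega-1)(1-\tfrac{p^2s^2}4),\ \omega-1\}>0$. I expect the main obstacle to be the rescaling-and-threshold step for $\tilde{\cL}_2$: one must verify that the algebra collapses exactly so that the constant shift $\tfrac4{p^2}$ matches the top P\"oschl--Teller level $\epsilon^2=s^2$ precisely when $\det(\cM)=\tfrac1{p+1}$, which is what makes the criterion sharp and is the source of the improvement over~\cite{Per05}. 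Everything else is routine diagonalization and an appeal to Lemmas~\ref{pavalemma0} and~\ref{pavalemma}.
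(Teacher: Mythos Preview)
Your proposal is correct and follows essentially the same route as the paper: diagonalize $\tilde\cL$ into $\tilde\cL_1,\tilde\cL_2$, apply Lemma~\ref{pavalemma} to $\tilde\cL_1$ to get the single negative and the single zero eigenvalue, then rescale $\tilde\cL_2$ to a P\"oschl--Teller operator and use the explicit eigenvalue formula (\ref{eigpair1}) to show its lowest eigenvalue stays positive exactly when $\det(\cM)<\tfrac1{p+1}$. Your separate treatment of the case $\det(\cM)\le 0$ is a small but useful addition, since the paper's computation of $s$ tacitly assumes $\alpha>0$.
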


We shall complete the proof of Theorem \ref{mainthm2} by showing a series of lemmas in the following.

The self-adjointness of $\tilde{\cL}$ is straightforward, since it is a compact perturbation of a selfadjoint operator \cite{Pava09}. Denote $\tilde{\cL}=\left( \begin{array}{c} \cL_1 \\ \cL_2 \end{array} \right)$ with 
$$\cL_1=-\omega{d^2\over dx^2}+(\omega-1)-\phi_0,$$
$$\cL_2=-\omega{d^2\over dx^2}+(\omega-1)-\det(\cM)\phi_0.$$           

First, observe that
$$\cL_1\psi_1=\lambda_1\psi_1\Rightarrow \tilde{\cL}\left( \begin{array}{c} \psi_1 \\ 0 \end{array} \right)=\lambda_1\left( \begin{array}{c} \psi_1 \\ 0 \end{array} \right),$$
$$\cL_2\psi_2=\lambda_2\psi_2\Rightarrow \tilde{\cL}\left( \begin{array}{c} 0\\ \psi_2  \end{array} \right)=\lambda_2\left( \begin{array}{c} 0\\ \psi_2  \end{array} \right).$$
Namely, any possible negative (or $0$) eigenvalue of $\cL_1$ or $\cL_2$ would make a (negative or $0$) eigenvalue of $\cL$. 

We will show presently that $\cL_1$ has a unique simple negative eigenvalue and a simple $0$ eigenvalue, the only possibility for $\cL$ to share the same property is that $\cL_2$ has strictly positive spectrum.

\subsubsection{Analysis on $\cL_1$} 
          
Recall $\phi_0={(p+1)(p+2)(\omega-1)\over 2} \sech^2({p\over 2}\sqrt{\omega-1\over\omega}x)$, hence 
$$\cL_1=-\omega{d^2\over dx^2}+(\omega-1)-{(p+1)(p+2)(\omega-1)\over 2} \sech^2({p\over 2}\sqrt{\omega-1\over\omega}x)$$ is a compact perturbation of a self-adjoint operator and it is self-adjoint too.
Also note this is exactly the linear operator we have by differentiating (\ref{soli1}), where we got a byproduct, namely the eigenpair of $\cL_1$:

$
 \left\{ \begin{array}{l}
         \mbox{eigenvalue}: 0,\\
         \mbox{eigenfunction}: \phi'(x), \mbox{ where } \phi \mbox{ is from  (\ref{phi})}. 
                \end{array}\right.
$ 

Note that $\phi$ has shape of $[\sech^2(x)]^{1\over p}$. Lemma \ref{pavalemma} applies to ${1\over \omega}\cL_1$ (hence to $\cL_1$ since $\omega>1$). Therefore, we have shown
\begin{lemma}
The self-adjoint operator $\cL_1$ has a unique negative simple eigenvalue and another simple eigenvalue $0$. 
 \end{lemma}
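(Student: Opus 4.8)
The plan is to recognize that $\cL_1$ is, up to the positive factor $\omega$ and a rescaling of the spatial variable, exactly a Schr\"odinger operator of the $\sech^2$ type covered in Lemma \ref{pavalemma0} and Lemma \ref{pavalemma}. First I would observe that, since $\omega>1$, the operator $\cL_1$ and ${1\over\omega}\cL_1$ have the same sign structure in their spectrum: an eigenvalue of $\cL_1$ is negative, zero, or positive precisely when the corresponding eigenvalue of ${1\over\omega}\cL_1$ is, and simplicity is preserved. So it suffices to analyze
\[
{1\over\omega}\cL_1=-{d^2\over dx^2}+{\omega-1\over\omega}-{(p+1)(p+2)(\omega-1)\over 2\omega}\,\sech^2\!\Big({p\over 2}\sqrt{\omega-1\over\omega}\,x\Big).
\]

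Next I would verify the hypotheses of Lemma \ref{pavalemma} directly with $\phi$ given by (\ref{phi}) and $f(\phi)={H_u(1,\mu)\over p+1}\phi^{p+1}$, so that $c={\omega-1\over\omega}>0$ and $f'(\phi)=H_u(1,\mu)\phi^{p}$; the solitary-wave profile $\phi$ from (\ref{phi}) is even, strictly decreasing away from the origin, hence $\phi'$ has exactly one zero, and $\phi\in L^2(\cR)$. Then Lemma \ref{pavalemma} applies verbatim to $-{d^2\over dx^2}+[c-f'(\phi)]={1\over\omega}\cL_1$: it has exactly one simple negative eigenvalue $\lambda_0$, the eigenvalue $0$ is simple with eigenfunction $\phi'$ (which is consistent with the byproduct obtained by differentiating (\ref{soli1})), and the rest of the spectrum is bounded below by some $\delta>0$. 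Pulling the factor $\omega$ back in gives the claim for $\cL_1$ itself.

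Alternatively, and perhaps more transparently for the reader who wants explicit numbers, I would rescale $y={p\over 2}\sqrt{(\omega-1)/\omega}\,x$ to put ${1\over\omega}\cL_1$ into the form $\big({p\over 2}\big)^2{\omega-1\over\omega}\big[-{d^2\over dy^2}+1-\alpha\,\sech^2 y\big]$ with $\alpha={2(p+1)(p+2)\over p^2}$, so that in the notation of (\ref{es}) one has $s(s+1)=\alpha$, giving $s={2(p+1)\over p}$ and $\lceil s\rceil=3$ for every integer $p\geq1$. By (\ref{eigpair1}) the discrete eigenvalues of $-{d^2\over dy^2}-\alpha\,\sech^2 y$ are $-\epsilon_n^2$ with $\epsilon_n=s-3+n$, $n=1,2,3$; adding the constant $1$ and noting $\epsilon_3=s>1$, $\epsilon_2=s-1=(p+2)/p>1$ only fails for... in fact one checks $1-\epsilon_2^2<0$ for all $p\ge1$ while $1-\epsilon_1^2>0$, so there are exactly two eigenvalues below the bottom $\big({p\over 2}\big)^2{\omega-1\over\omega}$ of the continuous spectrum, namely a negative one and the zero eigenvalue, each simple, and the continuous spectrum starts strictly above them. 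Either route yields the lemma; I would present the Lemma \ref{pavalemma} argument as the main line since it is shortest, and mention the explicit computation as a remark.

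The only genuine obstacle here is bookkeeping rather than mathematics: one must make sure that the constant shift and the spatial rescaling are tracked correctly so that the continuous spectrum $\big[{\omega-1\over\omega},\infty\big)$ of ${1\over\omega}\cL_1$ indeed lies strictly above the zero eigenvalue, and that ``exactly one negative eigenvalue'' is not off by one because of the ceiling function in (\ref{eigpair1}). This is precisely the place where the sharp eigenvalue count of the $\sech^2$ potential is being used, so I would state the inequality $\epsilon_1=s-1<1<s-1+1=\epsilon_2$... i.e. $\epsilon_1<1<\epsilon_2<\epsilon_3$, which holds since $s=2+2/p\in(2,4]$, cleanly and let Lemma \ref{pavalemma} do the rest.
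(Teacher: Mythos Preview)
Your main line---divide by $\omega$ and invoke Lemma \ref{pavalemma} with $c=(\omega-1)/\omega>0$ and $f'(\phi)=H_u(1,\mu)\phi^p$---is correct and is exactly what the paper does. The paper likewise notes that $\phi'$ is the zero-eigenfunction (from differentiating (\ref{soli1})) and that the $\sech^{2/p}$ profile satisfies the hypotheses of Lemma \ref{pavalemma}, then stops; the explicit eigenvalue is computed \emph{after} the lemma, not as part of its proof.

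Your ``alternative'' explicit computation, however, contains the very bookkeeping slips you warn against. After the rescaling $y={p\over2}\sqrt{(\omega-1)/\omega}\,x$ the constant term is $4/p^2$, not $1$: one gets
\[
{1\over\omega}\cL_1=\Big({p\over2}\Big)^2{\omega-1\over\omega}\Big[-{d^2\over dy^2}+{4\over p^2}-{2(p+1)(p+2)\over p^2}\sech^2 y\Big].
\]
Consequently, solving $s(s+1)=\alpha={2(p+1)(p+2)\over p^2}$ gives $s={p+2\over p}=1+{2\over p}\in(1,3]$, not $s={2(p+1)\over p}$; in particular $\lceil s\rceil$ equals $2$ for all $p\ge2$ (and $3$ only for $p=1$), so the eigenvalue count you sketch with three $\epsilon_n$'s and the inequality $\epsilon_1<1<\epsilon_2<\epsilon_3$ does not hold in general. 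With the correct values the shift is by $4/p^2$ and one finds $-s^2+4/p^2=-(1+4/p)<0$ for the bottom eigenvalue and $-(s-1)^2+4/p^2=0$ for the next, which is the right conclusion. Since you propose to present the Lemma \ref{pavalemma} route as the proof and relegate this to a remark, the lemma stands; just fix the remark or drop it.
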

 Moreover, we can compute the exact negative eigenvalue of $\cL_1$.

Let $y={p\over 2}\sqrt{\omega-1\over\omega}x$, then  
$$\cL_1=-\omega{d^2\over dx^2}+(\omega-1)-\phi_0$$           
$$=-\omega{p^2(\omega-1)\over 4\omega}{d^2\over dy^2}+(\omega-1)-\phi_0$$                      
$$={p^2(\omega-1)\over 4}\Big(-{d^2\over dy^2}+{4\over p^2}-{2(p+1)(p+2)\over p^2}\sech^2(y)\Big).$$  

It is easy to see that studying spectrum of $\cL_1$ is equivalent to studying that of 
$$\cL_{1y}=-{d^2\over dy^2}+{4\over p^2}-{2(p+1)(p+2)\over p^2}\sech^2(y).$$
And it suffices to investigate 
$$\cL_{1y0}=-{d^2\over dy^2}-{2(p+1)(p+2)\over p^2}\sech^2(y),$$ in which case the previous  knowledge of Schr$\ddot{o}$dinger operator comes into play.

We know that $\cL_{1y}$ has a unique negative eigenvalue. $\cL_{1y0}$ might have more than one negative eigenvalue, but only the least one of $\cL_{1y0}$ corresponds to the unique negative one of $\cL_{1y}$, with difference ${4\over p^2}$. So we focus on calculating the least negative eigenvalue of $\cL_{1y0}$.

Comparing $\cL_{1y0}$ with (\ref{maineigval}) and (\ref{es}), we have 
$$\alpha={2(p+1)(p+2)\over p^2},$$
$$\alpha=s(s+1)\Rightarrow s={-1+\sqrt{1+{8(p+1)(p+2)\over p^2}} \over 2}={p+2\over p}.$$

By  (\ref{eigpair1}), the least negative eigenvalue corresponds to the largest $\epsilon_n$, which is $s$ when $n=\lceil s\rceil$. Namely we have the negative eigenpair:
$$\lambda_{1y0}=-\epsilon_n^2=-s^2=-(1+{2\over p})^2,$$
$$\psi_{1y0}=P_s^s(\xi=\tanh(y))=\sech^sy.$$           
The eigenfunction comes from (\ref{hyperg1}) and (\ref{hyperg2}) with $\epsilon_n=s, \xi=\tanh(y)$.

Now tracing back to $\cL_{1y}$ and  $\cL_1$, we get their corresponding eigenvalues 
$$\lambda_{1y}=\lambda_{1y0}+{4\over p^2}=-(1+{4\over p}),$$
$$\lambda_1={p^2(\omega-1)\over 4}\lambda_{1y}={-p(p+4)(\omega-1)\over 4}.$$ 

\subsubsection{Analysis on $\cL_2$}

Our goal here is to find conditions under which $\cL_2$ has strictly positive spectrum.
Recall
$$\cL_2=-\omega{d^2\over dx^2}+(\omega-1)-\det(\cM)\phi_0$$           
$$=-\omega{p^2(\omega-1)\over 4\omega}{d^2\over dy^2}+(\omega-1)-\det(\cM)\phi_0$$                      
$$={p^2(\omega-1)\over 4}\Big(-{d^2\over dy^2}+{4\over p^2}-{2(p+1)(p+2)\over p^2}\det(\cM)\sech^2(y)\Big)$$  

\begin{lemma}
The operator $\cL_2$ has strictly positive spectrum if $$\det(\cM)<{1\over p+1}.$$
\end{lemma}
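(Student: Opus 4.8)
The plan is to reduce the spectral problem for $\cL_2$ to the explicitly solvable Schr\"odinger problem recorded in (\ref{maineigval})--(\ref{eigpair1}), in the same spirit as the treatment of $\cL_1$. As computed just above, $\cL_2=\frac{p^2(\omega-1)}{4}\,\cL_{2y}$ with
$$\cL_{2y}=-\frac{d^2}{dy^2}+\frac{4}{p^2}-\frac{2(p+1)(p+2)}{p^2}\det(\cM)\,\sech^2(y),$$
and since $\omega>1$ the prefactor is positive, so $\sigma(\cL_2)\subset(0,\infty)$ exactly when $\inf\sigma(\cL_{2y})>0$. If $\det(\cM)\le 0$ the potential term $-\frac{2(p+1)(p+2)}{p^2}\det(\cM)\sech^2(y)$ is nonnegative, whence $\cL_{2y}\ge\frac{4}{p^2}\,I>0$ and we are done; so from now on assume $0<\det(\cM)<\frac{1}{p+1}$.

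Write $\alpha_2=\frac{2(p+1)(p+2)}{p^2}\det(\cM)>0$ and $\cL_{2y}=\cL_{2y0}+\frac{4}{p^2}$, where $\cL_{2y0}=-\frac{d^2}{dy^2}-\alpha_2\sech^2(y)$. Choose $s_2>0$ with $s_2(s_2+1)=\alpha_2$. By the analysis of (\ref{maineigval})--(\ref{eigpair1}) (and Lemma \ref{pavalemma0}(3) for the essential spectrum), $\cL_{2y0}$ has continuous spectrum $[0,\infty)$ and finitely many eigenvalues $-\epsilon_n^2$ with $\epsilon_n=s_2-\lceil s_2\rceil+n$, $n=1,\dots,\lceil s_2\rceil$; the largest of these is $\epsilon_{\lceil s_2\rceil}=s_2$, so $\inf\sigma(\cL_{2y0})=-s_2^2$. (Equivalently, one checks directly that $\sech^{s_2}(y)$ is a strictly positive $L^2$ eigenfunction of $\cL_{2y0}$ with eigenvalue $-s_2^2$, hence necessarily the ground state.) Adding the constant $\frac{4}{p^2}$ gives $\inf\sigma(\cL_{2y})=\frac{4}{p^2}-s_2^2$.

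It remains to verify that $\det(\cM)<\frac{1}{p+1}$ forces $\frac{4}{p^2}-s_2^2>0$, i.e.\ $s_2<\frac{2}{p}$. Since $t\mapsto t(t+1)$ is strictly increasing on $(0,\infty)$, this is equivalent to $\alpha_2=s_2(s_2+1)<\frac{2}{p}\!\left(\frac{2}{p}+1\right)=\frac{2(p+2)}{p^2}$; substituting the value of $\alpha_2$ and cancelling the common positive factor $\frac{2(p+2)}{p^2}$, this reads precisely $(p+1)\det(\cM)<1$. Thus the hypothesis yields $\inf\sigma(\cL_{2y})>0$, and rescaling, $\inf\sigma(\cL_2)\ge\frac{p^2(\omega-1)}{4}\!\left(\frac{4}{p^2}-s_2^2\right)>0$, which is the assertion.

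The only genuinely delicate point is pinning down the bottom of $\sigma(\cL_{2y0})$: one must be sure the most negative eigenvalue is $-s_2^2$ (the largest $\epsilon_n$ being $s_2$) and that a bound state exists at all, so that the infimum is this eigenvalue rather than $0$ arising from the essential spectrum — an issue that occurs only when $\det(\cM)\le0$, handled separately. The explicit positive eigenfunction $\sech^{s_2}(y)$ removes any ambiguity. One should also note the strictness is sharp: at $\det(\cM)=\frac{1}{p+1}$ one gets $s_2=\frac{2}{p}$ and $0\in\sigma(\cL_2)$, so positivity genuinely fails there.
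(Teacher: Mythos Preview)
Your proof is correct and follows essentially the same approach as the paper: reduce $\cL_2$ to $\cL_{2y}$ by rescaling, split off the constant $\tfrac{4}{p^2}$, identify the bottom eigenvalue of $\cL_{2y0}$ as $-s_2^2$ via the explicit Schr\"odinger analysis, and check that $\det(\cM)<\tfrac{1}{p+1}$ is equivalent to $s_2<\tfrac{2}{p}$. Your version is in fact slightly more careful than the paper's in two respects: you handle the case $\det(\cM)\le 0$ separately (the Schr\"odinger eigenvalue formula (\ref{eigpair1}) presupposes $\alpha>0$, so this split is needed), and you argue the inequality $s_2<\tfrac{2}{p}$ via monotonicity of $t\mapsto t(t+1)$ rather than manipulating the square-root formula for $s$, which makes the biconditional transparent.
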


\begin{proof}

It suffices to prove the same result for $$\cL_{2y}=-{d^2\over dy^2}+{4\over p^2}-{2(p+1)(p+2)\over p^2}\det(\cM)\sech^2(y).$$ 

From lemma \ref{pavalemma0}, $\sigma(\cL_{2y})=\sigma_c(\cL_{2y})\cup \sigma_p(\cL_{2y})$ and $\sigma_c(\cL_{2y})=[{4\over p^2},\infty)$. So what we need to do here is to find what makes all the eigenvalues strictly positive. 

Similar to the previous subsection, for 
$$\cL_{2y0}=\cL_{2y}-{4\over p^2}=-{d^2\over dy^2}-{2(p+1)(p+2)\over p^2}\det(\cM)\sech^2(y),$$
we have the least negative eigenvalues $$\lambda_{2y0}=-s^2,$$ where
$$s={-1+\sqrt{1+{8(p+1)(p+2)\over p^2}\det(\cM)} \over 2}.$$

And what we want is $\lambda_{2y}=-s^2+{4\over p^2}>0$, which forces the following
$$s^2<{4\over p^2}\Rightarrow s<{2\over p}$$
$$\Rightarrow -1+\sqrt{1+{8(p+1)(p+2)\over p^2}\det(\cM)}<{4\over p}$$
$$\Rightarrow 1+{{8(p+1)(p+2)\over p^2}\det(\cM)}<({4\over p}+1)^2$$
$$\Rightarrow {{8(p+1)(p+2)\over p^2}\det(\cM)}<{8(p+2)\over p^2}$$
$$\Rightarrow \det(\cM)<{1 \over (p+1)}.$$
We complete Theorem \ref{mainthm2}.
\end{proof}
Next, we will show Theorem \ref{mainthm} by finding out when $d''(\omega)>0$ holds.

\section{Calculations for $d''(\omega)$}

\begin{theorem}\label{mainthm3}
We have the following for $d''(\omega)$:
\begin{enumerate}
\item If $p\leq 4$, then $d''(\omega)>0.$

\item If $p>4$, then there exists a $\omega_p>1$ such that $d''(\omega)>0$ for $\omega>\omega_p$ and $d''(\omega)<0$ for $1<\omega<\omega_p$.
\end{enumerate}
\end{theorem}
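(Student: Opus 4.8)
The plan is to reduce the computation of $d''(\omega)$ to an explicit elementary function of $\omega$ and $p$, and then analyze its sign. First I would exploit the fact that for the proportional solitary wave $\Phi_\omega = (\phi_\omega, \mu\phi_\omega)^t$ the two conserved functionals collapse to scalar integrals: since $\Phi_\omega^t\Phi_\omega = (1+\mu^2)\phi_\omega^2$ and $H(\Phi_\omega) = \phi_\omega^{p+2}H(1,\mu)$ by homogeneity, we get $\Omega(\Phi_\omega) = \tfrac{1}{2}(1+\mu^2)\int \phi_\omega^2 + (\phi_\omega')^2\,dx$ and $\Theta(\Phi_\omega) = -\tfrac12\int (1+\mu^2)\phi_\omega^2 + 2H(1,\mu)\phi_\omega^{p+2}\,dx$. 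Thus $d(\omega) = \Theta(\Phi_\omega) + \omega\Omega(\Phi_\omega)$ is, up to the constant factor $(1+\mu^2)$ and a rescaling of the nonlinearity coefficient, exactly the scalar $d$-function for the BBM equation \eqref{scalar} with $\hbar = 2H(1,\mu)/(1+\mu^2)$ — or one can simply work directly from \eqref{phi}.

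Next I would insert the explicit profile $\phi_\omega(z) = \big[\tfrac{(p+2)(\omega-1)}{2H_u(1,\mu)}\sech^2(\tfrac p2\sqrt{\tfrac{\omega-1}{\omega}}\,z)\big]^{1/p}$ into the integrals. The substitution $y = \tfrac p2\sqrt{\tfrac{\omega-1}{\omega}}\,z$ turns every integral into a constant (depending only on $p$) times a power of $(\omega-1)$ times a power of $\sqrt{\omega/(\omega-1)}$, using standard Beta-function evaluations of $\int \sech^{a}y\,dy$ and $\int \sech^{a}y\tanh^2 y\,dy$. Collecting exponents, one finds $\Omega(\Phi_\omega)$ and $\Theta(\Phi_\omega)$ are each of the form $\text{const}(p)\cdot(\omega-1)^{2/p - 1/2}\,\omega^{-1/2}\big(\text{affine in }\omega\big)$, so that $d(\omega) = C(p)\,(\omega-1)^{\frac{2}{p}-\frac12}\,\omega^{\frac12}\,g_p(\omega)$ for an explicit rational $g_p$; here $C(p)>0$ because $H_u(1,\mu)>0$. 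Then $d''(\omega)$ can be computed in closed form, and its sign is governed by the sign of a polynomial (or rational) function of $\omega$ with coefficients depending on $p$.

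Having that closed form, part (1) follows by checking that for $1\le p\le 4$ the relevant polynomial has no root in $(1,\infty)$ and the leading behavior forces positivity there (the borderline $p=4$ should reduce to a clean monotone expression, mirroring the classical BBM threshold $p=4$ for the single equation). For part (2), $p>4$, I would examine the two endpoint limits: as $\omega\to 1^+$ the factor $(\omega-1)^{\frac2p - \frac12}$ has a negative exponent (since $p>4$), and a careful expansion shows $d''(\omega)\to -\infty$ or at least $d''(\omega)<0$ near $1$, while as $\omega\to\infty$ the leading term of the bracket $g_p$ dominates and gives $d''(\omega)>0$; continuity plus a sign/monotonicity check on the controlling polynomial then yields a unique crossing $\omega_p>1$.

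The main obstacle I anticipate is the bookkeeping in the exponent arithmetic and, more seriously, showing the controlling polynomial has exactly one sign change on $(1,\infty)$ rather than several. The cleanest route is probably to differentiate $\log d$ twice, write $d''(\omega) = d(\omega)\big[(\log d)'' + ((\log d)')^2\big]$, and observe that $(\log d)'$ is a sum of simple terms $\tfrac{a}{\omega-1} + \tfrac{b}{\omega} + (\log g_p)'$ whose monotonicity is transparent; this converts the problem into showing a rational function of $\omega$ changes sign once, which can be done by clearing denominators and applying Descartes' rule or a direct derivative test. I would also double-check the $p\le 4$ case against the known scalar BBM result to catch any algebra slip, since the coupled system is designed so that $d''$ inherits the scalar structure exactly.
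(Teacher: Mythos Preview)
Your plan is essentially correct and will reach the same conclusion, but you are missing the one simplification that makes the computation clean. The paper does \emph{not} compute $d(\omega)$ and differentiate twice. Instead it uses the variational identity that follows from $\Phi_\omega$ being a critical point of $L=\Theta+\omega\Omega$: since $L'(\Phi_\omega)=0$, the chain rule gives
\[
d'(\omega)=\langle L'(\Phi_\omega),\partial_\omega\Phi_\omega\rangle+\Omega(\Phi_\omega)=\Omega(\Phi_\omega),
\]
so $d''(\omega)=\tfrac{d}{d\omega}\Omega(\Phi_\omega)$. This eliminates $\Theta$ entirely and reduces the problem to a single differentiation of the two-term expression $\Omega(\Phi_\omega)=\vartheta_1(\omega-1)^{2/p}\sqrt{\omega/(\omega-1)}+\vartheta_2(\omega-1)^{2/p}\sqrt{(\omega-1)/\omega}$ with $\vartheta_1,\vartheta_2>0$. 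One differentiation yields
\[
d''(\omega)=(\omega-1)^{\frac{2}{p}-\frac{3}{2}}\omega^{-\frac{3}{2}}q(\omega),
\]
where $q$ is an explicit \emph{quadratic} in $\omega$ with positive leading coefficient $\tfrac{2}{p}(\vartheta_1+\vartheta_2)$ and with $q(1)=2\vartheta_1(\tfrac{1}{p}-\tfrac{1}{4})$. The sign analysis is then immediate: for $p\le 4$ one has $q(1)\ge 0$ and $q'>0$ on $[1,\infty)$, hence $q>0$; for $p>4$ one has $q(1)<0$ and positive leading coefficient, so $q$ has exactly one root $\omega_p>1$.

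By contrast, your route---assembling $d=\Theta+\omega\Omega$ and differentiating twice, then contemplating logarithmic differentiation or Descartes' rule---will eventually collapse to the same quadratic, but the bookkeeping is heavier and the ``obstacle'' you flag (possible multiple sign changes) is an artifact of not yet seeing that the controlling factor is quadratic. Using $d'=\Omega$ is the standard Grillakis--Shatah--Strauss shortcut and is worth internalizing: it removes one order of differentiation and one functional from the computation for free.
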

\begin{proof}
We know the solitary wave solution $\Phi_{\omega}=(\phi_\omega,\psi_\omega)^t$ is a minimum point for the Lyapunov functional 
$$L(U)=\Theta(U)+\omega\Omega(U).$$
Hence $$L'(\Phi_\omega)=\Theta'(\Phi_\omega)+\omega\Omega'(\Phi_\omega)=0.$$
Recall $$d(\omega)=\Theta(\Phi_\omega)+\omega\Omega(\Phi_\omega).$$
So $$d'(\omega)=<L'(\Phi_\omega), {d\over d\omega}\Phi_\omega>+\Omega(\Phi_\omega)=\Omega(\Phi_\omega).$$
Thus, $$d''(\omega)={d\over d\omega}\Omega(\phi_\omega,\psi_\omega).$$ 
Using $\phi, \psi$ for $\phi_\omega,\psi_\omega$, we shall investigate 
$$\Omega(\phi,\psi)={1\over 2}\int_{\cR} \phi^2+\phi_x^2dx.$$
Recall $\phi=\Big[{(p+2)(\omega-1)\over 2H_u(1,\mu)} \sech^2({p\over 2}\sqrt{\omega-1\over\omega}x)\Big]^{1\over p}$. Let $A={(p+2)(\omega-1)\over 2H_u(1,\mu)}, B={p\over 2}\sqrt{\omega-1\over\omega}$, and we rewrite
$$\phi=A^{1\over p}\Big[\sech^2(Bx)\Big]^{1\over p},$$
$$\phi_x=-{2\over p}A^{1\over p}B~\mbox{sinh(Bx)}\sech^{{2\over p}+1}(Bx).$$

Sine $\psi=\mu\phi$, we have
$$\Omega(\phi,\psi)={1+\mu^2\over 2}\int_{\cR}\phi^2+\phi_x^2dx$$
$$={1+\mu^2\over 2}\Big[A^{2\over p}B^{-1}\int_{\cR} \sech^{4\over p}(y)dy+{1\over p^2}A^{2\over p}B\int_{\cR}\mbox{sinh}^2(y)\sech^{{4\over p}+2}(y)dy\Big]$$

$$=\vartheta_1(\omega-1)^{2\over p}\sqrt{\omega\over\omega-1}+\vartheta_2(\omega-1)^{2\over p}\sqrt{\omega-1\over\omega},$$
where $\vartheta_1>0, \vartheta_2>0$ are constants.

So $$d''(\omega)={d\over d\omega}\Omega(\phi,\psi)$$
$$=(\omega-1)^{{2\over p}-{3\over 2}}\omega^{-{3\over 2}}q(\omega),$$ where $q(\omega)$ is a quadratic polynomial
$$q(\omega)={2\over p}(\vartheta_1+\vartheta_2)\omega^2-({\vartheta_1\over 2}+{2\vartheta_2\over p}-{\vartheta_2\over 2})\omega-{\vartheta_2\over 2}.$$

We have trivially  $q(1)=2\vartheta_1({1\over p}-{1\over 4})$ and $q'(\omega)={4\over p}(\vartheta_1+\vartheta_2)\omega-({\vartheta_1\over 2}+{2\vartheta_2\over p}-{\vartheta_2\over 2})$.

So when $p\leq 4$, we have $q(1)\geq 0, q'(\omega)>0$, hence $q(\omega)>0$ and $d''(\omega)>0$ for $\omega>1$.

If $p>4$, then $q(1)<0$. For quadratic polynomial $q(x)$ with positive leading coefficient ${2\over p}(\vartheta_1+\vartheta_2)$, there is zero $c_p>1$ exists such that $q(\omega)>0$ for $\omega>\omega_p$ and $q(\omega)<0$ for $1<\omega<\omega_p$.
This concludes the lemma.
\end{proof}

Theorem \ref{mainthm} is concluded in view of Theorem \ref{mainthm2} and Theorem 
\ref{mainthm3}.

\section{Examples}

\begin{example}\label{example1}
$$u_t+u_x-u_{xxt}+b_1u^pu_x+b_2(u^pv)_x+b_2v^pv_x=0,$$
$$v_t+v_x-v_{xxt}+b_1v^pv_x+b_2u^pu_x+b_2(uv^p)_x=0.$$
This is an example constructed in \cite{Per05}, with $c_0=1, a_1=1, a_2=0$. In this case, $$H(U)=b_1{u^{p+2}\over (p+1)(p+2)}+b_1{v^{p+2}\over(p+1)(p+2)}+b_2{u^{p+1}\over p+1}v+b_2u{v^{p+1}\over p+1}.$$ 
In \cite{Per05}, the author consider the proportional solitary wave with $\mu=1$, finding the conditions for stability as
\begin{equation}\label{hak1}
-b_2(p+2)<b_1\leq -b_2(p-2),
\end{equation}
\begin{equation}\label{hak2}
-b_2(p+2)<b_1\leq -b_2{(p+2)(p^2-p-4)\over p^2+3p}.
\end{equation}
The method in this note gives  a stronger result; the upper bound is relaxed.
For the corresponding case where $\mu=1$, using our method, one finds
 $$\cM={1\over (p+1)H_u(1,\mu)}\left( \begin{array}{cc} H_{uu}(1,\mu) & H_{uv}(1,\mu) \\
           H_{uv}(1,\mu) & H_{vv}(1,\mu) \end{array}\right)$$
 $$={1\over b_1+(p+2)b_2}\left( \begin{array}{cc} b_1+pb_2 & 2b_2 \\
           2b_2 & b_1+pb_2 \end{array}\right).$$

$$\det(\cM)<{1\over p+1}\Rightarrow (p+1)[(b_1+pb_2)^2-4b_2^2]<(b_1+(p+2)b_2)^2$$
$$\Rightarrow (p+1)[(b_1+(p-2)b_2][(b_1+(p+2)b_2]<(b_1+(p+2)b_2)^2$$
$$\Rightarrow (p+1)[(b_1+(p-2)b_2]<(b_1+(p+2)b_2)$$
$$\Rightarrow pb_1\leq-(p+1)(p-3)b_2$$
$$\Rightarrow b_1\leq-{(p+1)(p-3)\over p}b_2.$$
To get the third inequality from bottom, the assumption $H_u=b_1+(p+2)b_2>0$ is used, which gives 
a lower bound for $b_1$, same as (\ref{hak1}) and (\ref{hak2}).
To see that our result is stronger, first note that $b_2>0$ holds true by comparing the first and third 
terms in inequality (\ref{hak1}) (or (\ref{hak2})).
Then, it is easy to verify that the upper bounds given here are larger than those given 
in (\ref{hak1}) and (\ref{hak2}).
\end{example}

\begin{example}\label{example2} Let $q\geq 1$ be an integer. Consider
$$u_t+u_x-u_{xxt}+(u^qv^{q+1})_x=0,$$
$$v_t+v_x-v_{xxt}+(u^{q+1}v^q)_x=0.$$

This is an example considered in \cite{Hak03}, where the result (Lemma 3.1) requires $\omega-1+H_{uv}-H_{u^2}>0$, which is obviously a much stronger assumption than ours. 
This is exactly the point: by taking advantage of the accurate point spectrum information, we weaken conditions like this.

By our result $$H(u,v)={u^{q+1}v^{q+1}\over q+1}.$$ 
Existence of solitary wave solution forces $\mu=1$. This case corresponds to $p=2q-1$. We have here
 $$\cM={1\over (p+1)H_u(1,\mu)}\left( \begin{array}{cc} H_{uu}(1,\mu) & H_{uv}(1,\mu) \\
           H_{uv}(1,\mu) & H_{vv}(1,\mu) \end{array}\right)$$
 $$={1\over 2q^2}\left( \begin{array}{cc} q(q-1) & q(q+1) \\
           q(q+1)&q(q-1) \end{array}\right).$$
           Hence
$$\det(\cM)=-{1\over q}<0<{1\over p+1}={1\over 2q}.$$ The condition is satisfied, so Theorem \ref{mainthm} applies.
\end{example}

\begin{example}\label{example3} Let $q\geq 1$ be an integer. Let $A, B\neq 0$, consider the decoupled system
$$u_t+u_x-u_{xxt}+(Au^{p+1})_x=0,$$
$$v_t+v_x-v_{xxt}+(Bv^{p+1})_x=0.$$
In this case $$H(u,v)={Au^{p+2}+Bv^{p+2}\over q+2}.$$ 
$$H_v(1,\mu)=\mu H_u(1,\mu)\Rightarrow B\mu^{p+1}=A\mu$$
$$\Rightarrow \mu=0, \mbox{ or } B\mu^p=A.$$

 $$\cM={1\over (p+1)H_u(1,\mu)}\left( \begin{array}{cc} H_{uu}(1,\mu) & H_{uv}(1,\mu) \\
           H_{uv}(1,\mu) & H_{vv}(1,\mu) \end{array}\right)$$
 $$={1\over (p+1)A}\left( \begin{array}{cc} A(p+1) & 0 \\
           0&B(p+1)\mu^p \end{array}\right).$$
           Hence
$$\det(\cM)={B\mu^p\over A}.$$
If $\mu=0$, then $\det(\cM)=0<{1\over p+1}$. Theorem \ref{mainthm} applies.\\
If $\mu\neq 0$, then $\det(\cM)=1>{1\over p+1}$.
\end{example}

This is a slight generalization of example in \cite{BCK15}. It tells that, the proportional solitary wave solution of a {\it decoupled} BBM system is generally not stable, unless one component is $0$. Indeed, if the solitary wave $(\phi_\omega,\psi_\omega)$ is perturbed in such a way that the initial data are exactly a new pair of solitary waves $(\phi_{\omega_1}, \psi_{\omega_2})$ that are close to $(\phi_\omega,\psi_\omega)$, but $\omega_1\neq \omega_2$, then the solution (new pair of solitary waves) will slip away form $(\phi_\omega,\psi_\omega)$ as time goes on, hence not stable according to the definition we are using here.  This seems due to the proportional requirements. This conclusion more or less goes against intuition. The fact that each component is stable while the combination is unstable actually brings up interesting questions: does there exist {\it coupled} non-proportional solitary wave solutions and if so, are they stable? Is there a different definition of stability that can accommodate the missing stability that solely caused by the phase difference?

In the study of vector solitons in birefringent nonlinear optical fibers, the nonlinear Schr$\ddot{o}$dinger equations have been proved numerically to have non-proportional solitary wave solution \cite{Yang97}. Next, the authors hope to investigate the similar subject in other dispersive equation system such as coupled BBM model.

The final example is an analogy to the one in \cite{Yang97} for a coupled nonlinear Sch$\ddot{o}$rdinger equations.
\begin{example}\label{jianke1}
$$u_t+u_x-u_{xxt}+((u^2+\beta v^2)u)_x=0,$$
$$v_t+v_x-v_{xxt}+((v^2+\beta u^2)v)_x=0.$$
 
In this case $p=2$ and $$H(u,v)={u^4\over 4}+{\beta u^2v^2\over 2}+{v^4\over 4}.$$ 
$$H_v(1,\mu)=\mu H_u(1,\mu)\Rightarrow (\beta-1)\mu(1-\mu^2)=0.$$\\
If $\beta=1$, then $\det(\cM)={1\over 3}={1\over p+1}$.\\
If $\beta\neq 1, \mu=0$, then $\det(\cM)={\beta\over 3}<{1\over p+1}\Rightarrow\beta<1$.\\
If $\beta\neq 1, \mu\neq 0$, then $\mu^2=1, \det(\cM)<{1\over 3}\Rightarrow \beta^2>1$.
\end{example}

\section*{Acknowledgments} The authors would like to thank Professor Irena Lasiecka for initializing and stimulating their joint work. The second author is grateful for Professor Irena's kind hospitality and warm encouragement for all time.


\medskip
\medskip

\end{document}